\documentclass[a4paper]{amsart}
\usepackage{amssymb,amsmath,float,amsfonts,latexsym,amsthm,comment,pinlabel,comment}
\usepackage{tikz}
\usetikzlibrary{arrows}
 \usepackage{tikz-cd}
\usetikzlibrary{matrix,arrows,decorations.pathmorphing}
\usetikzlibrary{decorations.markings}

\voffset = -18pt
\hoffset = -27pt
\textwidth = 5.6in
\textheight = 8.7in
\numberwithin{equation}{section}

\usepackage[mathscr]{eucal}

\usepackage[T1]{fontenc}
\usepackage{txfonts}
\usepackage[pdftex,bookmarks=true]{hyperref}
\usepackage{graphicx}
\usepackage{import}
\usepackage{epsfig}
\usepackage{enumerate}
\usepackage[all]{xy}
\theoremstyle{plain}
\newtheorem{thm}[subsection]{Theorem}

\newtheorem{defn}[subsection]{Definition}

\newtheorem{lemma}[subsection]{Lemma}

\theoremstyle{definition}

\newtheorem{notn}[subsection]{Notation}
\newtheorem{rmk}[subsection]{Remark}


\numberwithin{equation}{section}

\author{Subash Chandra Behera}
\address{
School of Mathematics \& Computer Science\\
Indian Institute of Technology Goa\\
At Goa College of Engineering Campus\\
Farmagudi, Ponda-403401 \\
Goa, India}
\email{subash20232102@iitgoa.ac.in}
\author{Shiv Parsad}
\address{
School of Mathematics \& Computer Science\\
Indian Institute of Technology Goa\\
At Goa College of Engineering Campus\\
Farmagudi, Ponda-403401 \\
Goa, India} 
\email{shiv@iitgoa.ac.in}

\begin{document}

\title{Angle Parametrization of Teichm\"uller space and hyperelliptic surfaces}

\subjclass[2020]{Primary 32G15, 57K20}

\keywords{Surface, canonical polygon, Fuchsian group, Teichm\"uller space, hyperelliptic surface}

\maketitle

\begin{abstract}
Let $S_g$ be a closed orientable surface of genus $g \geq 2$, and let $\mathcal{T}_g$ be the Teichmüller space of $S_g$. Let $\mathcal{H}_g$ denotes the space of all hyperelliptic surfaces of genus $g$.  For $g\geq 3$, we have proved that $\mathcal{T}_g$ can be parametrized by $6g-5$ angle parameters. We also prove that for $g\geq 2$, $\mathcal{H}_g$ can be parametrized by $4g-2$ angle  parameters.
\end{abstract}

\maketitle


\section{Introduction}
Let $S_g$ be a closed orientable surface of genus $g \geq 3$, and $\mathcal{T}_g$ be the Teichmüller space of $S_g$. Each element in $\mathcal{T}_g$ can be realized as a marked hyperbolic polygon, a concept initially explored by Coldway and Zieschang; see \cite{HZ70, HZ80}. In these works, it is also proved that $\mathcal{T}_g$ is homeomorphic to $\mathbb{R}^{6g-6}$. Alternative proofs can be found in \cite{PB, PSS}.

In \cite{PSS}, Schaller introduced a parametrization of $\mathcal{T}_g$ by considering lengths as parameters. This approach involves the study of a hyperbolic polygon in which opposite sides are identified based on specific angle conditions. This polygon is referred to as the canonical polygon. The Teichmüller space $\mathcal{T}_g$ can be identified with the space of all canonical polygons (up to some equivalence). Specifically, Schaller focused on a $4g$-sided canonical polygon and chose $6g-5$ length parameters for its parametrization. In \cite{UH},  Hamenstädt constructs $6g-5$ simple closed curves to give an embedding of $\mathcal{T}_g$ into $\mathbb{R}^{6g-5}$. In \cite{OY2}, Okumara has obtained explicit global real analytic parametrizations by angle parameters for $\mathcal{T}_2$. This article introduces a novel parametrization of $\mathcal{T}_g$ using angles for $g\geq 3$. We have proved the following result:

\begin{thm}\label{thm:1}
  For $g\geq 3$, $\mathcal{T}_g$ can be parametrized by $6g-5$ angle parameters.
\end{thm}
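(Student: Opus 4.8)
The plan is to follow the canonical-polygon description of $\mathcal{T}_g$ used by Schaller and to trade his length coordinates for angle coordinates by means of hyperbolic trigonometry. Concretely, I would represent each marked hyperbolic structure by its canonical geodesic $4g$-gon $P$, with side-pairing word $a_1 b_1 a_1^{-1} b_1^{-1}\cdots a_g b_g a_g^{-1} b_g^{-1}$, whose $4g$ vertices are identified to a single cone point, so that the interior angles satisfy $\sum_{i=1}^{4g}\theta_i = 2\pi$ and the paired sides have equal length. The identification of $\mathcal{T}_g$ with the space of such polygons modulo isometry is the starting point, and everything reduces to finding a good system of angle functions on this polygon space.

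Step two is to fix a triangulation of $P$ by geodesic diagonals (for instance a fan from one distinguished vertex, giving $4g-2$ triangles). The decisive tool is that a hyperbolic triangle is rigid under its angles: by the second hyperbolic law of cosines each edge length is a real-analytic function of the three angles of any triangle containing it, namely $\cosh \ell = (\cos\gamma + \cos\alpha\cos\beta)/(\sin\alpha\sin\beta)$. Hence all side and diagonal lengths of $P$, and therefore $P$ itself up to isometry, are determined once a suitable collection of triangle angles is prescribed, provided the compatibility conditions hold: along each interior diagonal the length computed from the two adjacent triangles must agree, the two triangle-angles meeting at each polygon vertex must add up to the interior angle there, the paired boundary edges must come out equal in length, and the total angle must be $2\pi$.

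Step three is the bookkeeping that isolates exactly $6g-5$ free angles. Starting from the $8g-3$ real parameters of a $4g$-gon up to isometry (equivalently, the triangle angles modulo the diagonal-length compatibilities), I would impose the $2g$ equal-length conditions on paired sides together with the remaining closing-up (holonomy) conditions that force the side pairings to generate a surface group, and check that the resulting level set is $(6g-6)$-dimensional while $6g-5$ of the angles restrict to it as a real-analytic embedding into $\mathbb{R}^{6g-5}$, cut out by a single relation, mirroring Schaller's $6g-5$ lengths and Hamenst\"adt's embedding. I would then define the parametrization $\Phi : \mathcal{T}_g \to \mathbb{R}^{6g-5}$ by these angles and prove it is a homeomorphism onto its image: injectivity follows because the angles reconstruct every length and hence the polygon, while continuity in both directions follows from the real-analyticity of the trigonometric relations, with properness ensuring continuity of the inverse.

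The main obstacle I expect is precisely step three: verifying that the chosen $6g-5$ angles are genuinely free coordinates transverse to all the compatibility and closing-up equations, i.e., that the Jacobian of the constraint system has maximal rank, so that the lengths can be solved for in terms of the angles and an arbitrary admissible angle tuple is realizable as an honest embedded convex $4g$-gon. Managing the interplay between the diagonal-length compatibilities and the equal-paired-side conditions, and showing that they do not secretly force extra relations among the angles, is the delicate point; the remainder is hyperbolic-trigonometric computation together with a standard invariance-of-domain and properness argument to upgrade the continuous injection to a homeomorphism.
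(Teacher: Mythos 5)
Your overall frame (represent a point of $\mathcal{T}_g$ by its canonical $4g$-gon, decompose it, and use hyperbolic trigonometry to trade length coordinates for angle coordinates) matches the paper's starting point, but at the decisive step your plan has a genuine gap: you never exhibit a specific set of $6g-5$ angles, and you replace the actual argument by an unverified transversality claim --- ``check that the Jacobian of the constraint system has maximal rank'' --- which you yourself flag as the main obstacle. That rank verification, for a full fan triangulation with the $2g$ equal-length conditions and the closing conditions imposed simultaneously, is not routine, and your injectivity argument (``the angles reconstruct every length'') is circular until it is carried out: whether the chosen angles determine all the lengths is exactly the content of the theorem. Note also that angle-angle-angle rigidity of triangles alone cannot drive the reconstruction. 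In the paper's proof the triangles in the second half of the polygon are \emph{not} determined by prescribing their angles; they are determined by transporting side lengths through the side-pairing conditions $L(a_{i+2g})=L(a_i)$ and then solving side-angle-side and side-side-angle problems (the functions $\psi$ and $f$ of Section~\ref{sec:3}), so only $2g-1$ of the fan angles $\beta_i$ are free parameters, while the remaining $\beta_{2g+i}$ and $\phi_{2g+i}$ are computed, not chosen.

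The paper avoids your Jacobian problem entirely by an explicit sequential reconstruction. It decomposes $P(g)$ --- not into a full fan: the segment $b_{2g}$ joins $Q_{2g}$ to $Q_{2g+2}$ --- into $4g-4$ triangles plus one quadrilateral $S$; takes as parameters $4g-5$ of the interior angles $\alpha_i$, the auxiliary angle $\phi_1$, and the fan angles $\beta_1,\dots,\beta_{2g-1}$; recursively computes every remaining angle and length via $f$, $g$, $h$, $\psi$; and then pins down the final quadrilateral by Lemma~\ref{Quadrilateral lemma}, which says a hyperbolic quadrilateral with given side lengths is uniquely determined by the two quantities $\alpha+\beta+\gamma+\delta$ and $\alpha-\beta+\gamma-\delta$, proved by the monotonicity comparison of Lemma~\ref{res:Schmutz}. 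Those two quantities are supplied exactly by conditions (ii) and (v) of Definition~\ref{canonical polygon}, so the constraints you wanted to handle by transversality are instead consumed one by one in the construction. To rescue your route you would either have to carry out the rank computation in full (which essentially amounts to re-deriving this sequential solvability) or import the paper's uniqueness lemma for the last piece; as written, your proposal describes the shape of a proof but leaves its actual content --- the freeness of the chosen angles and the uniqueness of the reconstructed polygon --- unproved.
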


We explore the elements within $\mathcal{T}_g$ where the surface is hyperelliptic, denoting the space of all such elements as $\mathcal{H}_g$ ( for more details, we refer the reader to \cite{FK, NR, MR} ). In \cite{PSS}, Schaller proved that a closed hyperbolic surface is hyperelliptic if and only if the surface possesses a corresponding canonical polygon with equal opposite angles. It is a fact that all closed hyperbolic surfaces of genus $2$ are hyperelliptic \cite{PSS1, PSS}. We study the space $\mathcal{H}_g$ using geodesic angles. In particular, we prove the following:

\begin{thm}\label{thm:2}
  For $g\geq 2,$  $\mathcal{H}_g$ can be parametrized by $4g-2$ angle  parameters.
\end{thm}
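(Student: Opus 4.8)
The plan is to identify $\mathcal{H}_g$ with the locus of canonical $4g$-gons having equal opposite angles, and then to extract an angle coordinate system on this locus from the parametrization of Theorem~\ref{thm:1}. By Schaller's criterion \cite{PSS}, a point of $\mathcal{T}_g$ lies in $\mathcal{H}_g$ exactly when its canonical $4g$-gon $P$ has equal opposite angles, that is, the interior angle at the vertex $v_k$ equals the interior angle at $v_{k+2g}$ for every $k$. Geometrically this records the hyperelliptic involution, which acts on $P$ as the order-two rotation $v_k\mapsto v_{k+2g}$ and interchanges each side with its opposite. For $g\geq 3$ I would start from the $6g-5$ angles that coordinatize $\mathcal{T}_g$ and rewrite the equal-opposite-angle condition as a system of linear relations among these angles.

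The next step is the dimension count. Since $\dim\mathcal{T}_g-\dim\mathcal{H}_g=2g-4$, the equal-opposite-angle condition cuts out a subspace of codimension $2g-4$ in $\mathcal{T}_g$; expressed in the $6g-5$ angle coordinates, which carry a single redundancy, I expect it to amount to $2g-3$ independent relations, leaving precisely $(6g-5)-(2g-3)=4g-2$ free angles. I would then single out an explicit family of $4g-2$ of the surviving angles and show that they are genuinely free: every admissible assignment must extend uniquely to a symmetric hyperbolic $4g$-gon. The reconstruction is carried out on the half-polygon that is a fundamental domain for the rotation $v_k\mapsto v_{k+2g}$, using the standard existence and uniqueness results for hyperbolic polygons with prescribed angles.

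To conclude I would check that the resulting map $\Phi\colon\mathcal{H}_g\to\mathbb{R}^{4g-2}$ is a homeomorphism onto its image. Surjectivity onto the symmetric locus is Schaller's criterion together with the reconstruction step; injectivity holds because the symmetric angles determine $P$, and hence the marked hyperbolic structure, uniquely; continuity in both directions follows from the continuous dependence of the canonical polygon and of its angles on the point of $\mathcal{T}_g$, just as in the proof of Theorem~\ref{thm:1}. The case $g=2$ is treated on its own: every closed hyperbolic surface of genus $2$ is hyperelliptic \cite{PSS1, PSS}, so $\mathcal{H}_2=\mathcal{T}_2$ and $4g-2=6=\dim\mathcal{T}_2$, and the assertion becomes the genus-two angle parametrization, in agreement with \cite{OY2}.

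I expect the main obstacle to be the reconstruction-and-rigidity step: showing that a prescribed admissible choice of the $4g-2$ symmetric angles is realized by one and only one hyperbolic $4g$-gon with equal opposite angles. The difficulty is that the side-pairing (closing-up) equations and the angle-sum constraint interact with the imposed symmetry, so one has to verify that the symmetric system is neither over- nor under-determined, i.e. that the $2g-3$ relations are truly independent and that no additional hidden relation survives. Pinning down this independence, together with the exact admissible range of the $4g-2$ angles, is where the substantive work lies.
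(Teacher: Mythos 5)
Your proposal correctly reduces the problem to the locus of canonical polygons with equal opposite angles via Schaller's criterion, but at the decisive step it remains a plan whose hard part you yourself defer, and the two tools you invoke to close it would fail. First, the equal-opposite-angle conditions are \emph{not} linear relations in the $6g-5$ coordinates of Theorem~\ref{thm:1}: those coordinates consist of $\alpha_i$ for $i=1,\dots,4g-4$, $i\neq 2g+1$, together with $\phi_1$ and the auxiliary angles $\beta_1,\dots,\beta_{2g-1}$ at the diagonals, while the remaining interior angles $\alpha_{2g+1},\alpha_{4g-3},\dots,\alpha_{4g}$ are recovered only as transcendental functions of the parameters (through iterated use of $f,g,h,\psi$). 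Hence conditions such as $\alpha_{2g-1}=\alpha_{4g-1}$ are genuinely nonlinear, and your count of ``$2g-3$ independent relations'' has no linear-algebra justification. Second, and more seriously, the appeal to ``standard existence and uniqueness results for hyperbolic polygons with prescribed angles'' is unfounded: a hyperbolic triangle is rigid given its angles, but already a quadrilateral with prescribed angles has a positive-dimensional family of realizations (compare Lemma~\ref{res: quadrilateral lemma}, which needs two side lengths and two further angles on top of an angle sum to force uniqueness). So the half-polygon, a $(2g+2)$-gon, is not determined by its angles, and the reconstruction-and-rigidity step cannot be closed as proposed.

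The idea your sketch is missing, and which the paper's proof turns on, is a special incidence fact for hyperelliptic canonical polygons: the $2g$ main diagonals $k_i=Q_iQ_{2g+i}$ all pass through a single point $O$ (the lift of a fixed point of the hyperelliptic involution), and $O$ bisects each $k_i$. This yields a decomposition of $P(g)$ into $4g$ triangles around $O$, interchanged in pairs by the involution, so only the $2g$ triangles $T_1,\dots,T_{2g}$ in a half-polygon need be recovered --- and triangles, unlike polygons, \emph{are} rigid given their angles. The paper's parameters are $\alpha_1,\dots,\alpha_{2g-3}$, the central angles $\delta_1,\dots,\delta_{2g-1}$, and $\beta,\phi_1$ (totalling $4g-2$); iterating the function $h$ of Lemma~\ref{alpha} determines each $\phi_i$ and hence each $T_i$ for $i\leq 2g-2$, and the final quadrilateral $OQ_{2g}Q_{2g-1}Q_{2g-2}$ is pinned down by the monotonicity Lemma~\ref{res: quadrilateral lemma}, using $b_{2g-1}=b_1$ together with the constraints from Definition~\ref{canonical polygon} that $\phi_{2g}+\alpha_{2g}+\gamma$ and $\delta_{2g}=\pi-\sum_{i=1}^{2g-1}\delta_i$ are constants. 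Note also that this construction works uniformly for all $g\geq 2$, so the separate treatment of $g=2$ via $\mathcal{H}_2=\mathcal{T}_2$ and \cite{OY2} is unnecessary.
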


It follows from a result of Harvey~\cite{HW, HM} that  $\mathcal{H}_g$ has dimension $4g-2$ and so, the angle parametrization given in Theorem \ref{thm:2} is optimal. The parametrization through angles proves to be an efficient method for understanding the deformation of hyperbolic surfaces, offering an alternative perspective compared to length parameters. The proof of this result uses the study of canonical polygons and basic hyperbolic trigonometry.


\section{Fuchsian groups, Canonical polygons and Teichm\"uller space}
This section begins with the basics of hyperbolic geometry. Further we define the notion of Fuchsian groups and hyperbolic surface. We also define the notion of canonical polygon which we used to define the notion of Teichm\"uller space. The most of the material in this section can be found in \cite{PSS} (For more details, see \cite{SK,PB}).

Let $\mathbb{H}$ denote the upper halfplane and $PSL(2,\mathbb{R})$ be the group of orientation preserving isometries of $\mathbb{H}$.
Equivalently,
\[PSL(2, \mathbb{R}) = \left\{ 
\begin{pmatrix}
a & b \\
c & d
\end{pmatrix}
\ \middle|\ a, b, c, d \in \mathbb{R} \textit{ and } ad - bc = 1
\right\} / \{\pm I\}
\]
$ PSL(2,\mathbb{R})$ acts on $\mathbb{H}$ by m\"obius transfermation as follow : $\displaystyle \begin{pmatrix}
a & b \\
c & d
\end{pmatrix}z = \frac{az+b}{cz+d}.$

\subsection{Classifications of Isometries}
Each $T\in PSL(2,\mathbb{R})$ can be represented by a pair of matrices $\pm A\in SL(2, \mathbb{R})$. Define $tr(T)=|{tr(A)}|$. $T$ is said to be elliptic if $tr(T)< 2$, parabolic if $tr(T)=2$, and hyperbolic if $tr(T)>2$.

\begin{defn}
    A Fuschian group is a discrete subgroup of $PSL(2,\mathbb{R})$.
\end{defn}

\begin{thm}
    Let $\Gamma$ be a Fuschian group without elliptic elements, then $\mathbb{H}/\Gamma$ is a hyperbolic surface.
\end{thm}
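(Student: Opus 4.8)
The plan is to realize $\mathbb{H}/\Gamma$ as the quotient of a manifold by a \emph{free}, \emph{properly discontinuous} action by isometries, from which the hyperbolic surface structure follows essentially automatically. Concretely, I would verify three things in turn: that no nontrivial element of $\Gamma$ fixes a point of $\mathbb{H}$ (freeness), that $\Gamma$ acts properly discontinuously on $\mathbb{H}$, and that the hyperbolic metric on $\mathbb{H}$ descends consistently to the quotient.

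First, freeness. The absence of elliptic elements is exactly what rules out interior fixed points. Given $T=\begin{pmatrix} a & b \\ c & d \end{pmatrix}\in PSL(2,\mathbb{R})$ with $T\neq\mathrm{id}$, a fixed point $z\in\mathbb{H}$ satisfies $\frac{az+b}{cz+d}=z$, i.e. $cz^2+(d-a)z-b=0$, whose discriminant equals $(a+d)^2-4(ad-bc)=tr(T)^2-4$. In the elliptic case $tr(T)<2$ this is negative, so the two roots are complex conjugates (here necessarily $c\neq 0$) and exactly one lies in $\mathbb{H}$; when $tr(T)\geq 2$ the roots are real and hence lie on $\partial\mathbb{H}=\mathbb{R}\cup\{\infty\}$, not in $\mathbb{H}$. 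Thus the only elements possessing a fixed point in $\mathbb{H}$ are elliptic, and since $\Gamma$ contains none, its action on $\mathbb{H}$ is free.

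Second, proper discontinuity. Here I would invoke the standard equivalence for subgroups of $PSL(2,\mathbb{R})$: a subgroup is discrete precisely when it acts properly discontinuously on $\mathbb{H}$. Since $\Gamma$ is Fuchsian, it is discrete by definition, so for every compact $K\subset\mathbb{H}$ the set $\{\gamma\in\Gamma:\gamma K\cap K\neq\emptyset\}$ is finite. Combined with freeness, this produces, for each $z\in\mathbb{H}$, an open neighbourhood $U$ with $\gamma U\cap U=\emptyset$ for all $\gamma\neq\mathrm{id}$. The quotient map $\pi:\mathbb{H}\to\mathbb{H}/\Gamma$ is then a covering map, and carrying such neighbourhoods down through $\pi$ furnishes coordinate charts exhibiting $\mathbb{H}/\Gamma$ as a Hausdorff, second countable surface.

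Finally, since every element of $\Gamma\subset PSL(2,\mathbb{R})$ is an isometry of the hyperbolic metric on $\mathbb{H}$, the restriction of $\pi$ to each evenly covered neighbourhood is a homeomorphism onto its image along which the metric can be transported; on overlaps the two prescriptions agree because the relevant transition maps are elements of $\Gamma$ and hence isometries. This equips $\mathbb{H}/\Gamma$ with a well-defined hyperbolic metric, making it a hyperbolic surface. The step I expect to be the genuine work is the proper discontinuity: verifying that discreteness of $\Gamma$ in $PSL(2,\mathbb{R})$ forces the orbit of every compact set to meet it only finitely often. This rests on the properness of the $PSL(2,\mathbb{R})$-action on $\mathbb{H}$ and is the crux on which the manifold structure — and therefore the whole statement — depends.
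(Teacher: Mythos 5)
Your proposal is correct: the paper states this theorem as standard background without giving any proof of its own (deferring to the textbook references such as \cite{SK,PB}), and your argument --- freeness via the discriminant computation showing only elliptic elements fix points of $\mathbb{H}$, proper discontinuity from discreteness of $\Gamma$ in $PSL(2,\mathbb{R})$, and descent of the metric through the covering $\pi:\mathbb{H}\to\mathbb{H}/\Gamma$ --- is exactly the standard proof found in those references. The one step you invoke rather than prove (discrete $\Longleftrightarrow$ properly discontinuous, resting on properness of the $PSL(2,\mathbb{R})$-action on $\mathbb{H}\cong PSL(2,\mathbb{R})/PSO(2)$) is correctly identified as the crux and is legitimate to cite, so there is no gap.
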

\begin{defn}
    Let $\Gamma$ be a Fuschian group. A fundamental domain for $\Gamma$ is a measurable subset $D$ of $\mathbb{H}$ such that
    \begin{enumerate}[(i)]
        \item $\bigcup_{\gamma \in  \Gamma} \gamma(D)=\mathbb{H}$, and
        \item $int(\Bar{D})\bigcap int(\gamma(\Bar{D}))=\phi$, for $\gamma\neq identity$.
    \end{enumerate}
\end{defn}
\begin{defn}\label{canonical polygon}\cite{PSS}
    Let $g \geq 2$ be an integer. A canonical polygon $P(g)$ is a polygon with $4 g$ sides, denoted by $a_1, \ldots, a_{4 g}$, ordered clockwise, and angles $\alpha_i$ between $a_i$ and $a_{i+1}, i=1, \ldots, 4 g$ (indices are taken modulo $4 g$), such that
\begin{enumerate}[(i)]
\item $a_i$ and $a_{i+2 g}$ have the same length, $i=1, \ldots, 2 g$;

\item the sum of the angles of $P(g)$ is $2 \pi$;

\item $0<\alpha_i<\pi, i=1, \ldots, 4 g$;

\item $\alpha_1=\alpha_{2 g+1}$;

\item $\displaystyle \sum_{i=1}^g \alpha_{2 i-1}+\sum_{i=g+1}^{2 g} \alpha_{2 i}=\sum_{i=1}^g \alpha_{2 i}+\sum_{i=g+1}^{2 g} \alpha_{2 i-1}$.

\end{enumerate}
\end{defn}
\begin{figure}[H]
    \centering
    \includegraphics[width=4cm]{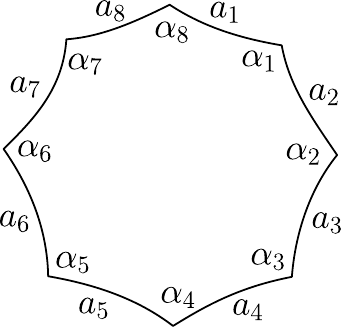}
    \caption{A canonical polygon $P(2).$}
    
\end{figure}

\begin{rmk}
    It can be seen from the local picture ( see Figure \ref{local} ) that the curves $a_{i}'s$ intersect transversally at a point and condition (iv) and (v) of the definition of canonical polygon ensures that $a_1$ and $a_2$ projects to geodesics in the hyperbolic surface.
\begin{figure}
    \centering
    \includegraphics[width=5cm]{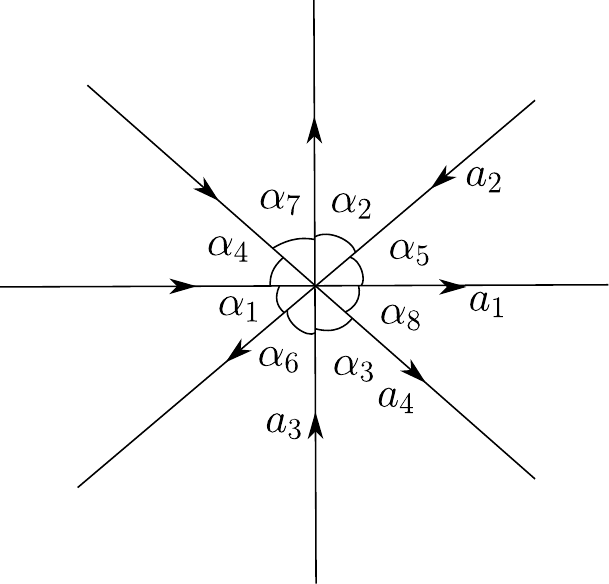}
    \caption{}
    \label{local}
\end{figure}
\end{rmk}
    
\begin{thm}(Poincaré)\label{PT}
     A canonical polygon $P=P(g)$ is the fundamental domain of a Fuchsian group $\Gamma$, and $\mathbb{H} / \Gamma$ is a closed hyperbolic surface of genus $g$. The group $\Gamma$ is generated by the $2g$ elements $\gamma_i$ where $\gamma_i$ is defined by the conditions $\gamma_i(P) \cap \operatorname{int}(P)=\varnothing$ and $\gamma_i\left(a_i\right)=a_{i+2 g}$ if $i$ is odd and $\gamma_i\left(a_{i+2 g}\right)=a_i$ if $i$ is even, $i=1, \ldots, 2 g$.
\end{thm}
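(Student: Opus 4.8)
The plan is to verify that the canonical polygon $P(g)$ satisfies the hypotheses of Poincaré's polygon theorem and then to read off the topological type of the quotient. First I would construct the side-pairing isometries. By condition (i) of Definition \ref{canonical polygon}, the opposite sides $a_i$ and $a_{i+2g}$ have equal hyperbolic length, so for each $i$ there is a unique orientation-preserving isometry $\gamma_i \in PSL(2,\mathbb{R})$ carrying $a_i$ onto $a_{i+2g}$ (respectively $a_{i+2g}$ onto $a_i$) while sending $\operatorname{int}(P)$ to the side of the shared edge not containing it; this is exactly the normalization $\gamma_i(P)\cap\operatorname{int}(P)=\varnothing$ in the statement. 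These $2g$ maps, together with their inverses, realize all the side pairings of $P$.

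The heart of the argument is the vertex-cycle analysis. I would trace the orbit of the $4g$ vertices of $P$ under the side pairings and show that they all lie in a single cycle, exactly as in the classical construction of a genus-$g$ surface from a $4g$-gon with the identification pattern encoded by the pairings above. Along this cycle I would sum the interior angles; by condition (ii) this sum equals $2\pi$. Poincaré's theorem requires the cycle condition that the angle sum be $2\pi/m$ for some positive integer $m$, and here $m=1$. This is the step that guarantees the composite isometry around the vertex cycle is the identity (rather than a nontrivial elliptic rotation), so that $\Gamma$ contains no elliptic elements and the quotient acquires a smooth hyperbolic structure with no cone point at the image of the vertex.

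With the side pairings and the cycle condition in hand, Poincaré's polygon theorem yields that $\Gamma=\langle \gamma_1,\dots,\gamma_{2g}\rangle$ is discrete, hence Fuchsian, that $P$ is a fundamental domain for the action of $\Gamma$ on $\mathbb{H}$, and that $\mathbb{H}/\Gamma$ is a complete hyperbolic surface; since $P$ is a compact polygon, the quotient is closed. To identify the genus I would compute the Euler characteristic from the induced cell structure: the $4g$ sides are identified in $2g$ pairs, all $4g$ vertices collapse to a single point, and there is one $2$-cell, giving $\chi = 1 - 2g + 1 = 2-2g$, so the genus is $g$.

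The main obstacle is the vertex-cycle bookkeeping: one must confirm that the prescribed pairing $a_i \leftrightarrow a_{i+2g}$ really produces a single vertex cycle (so that the quotient is connected and orientable of the expected genus), and that conditions (iv) and (v) are precisely what force the paired sides to descend to smooth closed geodesics rather than to closed curves with a corner, as indicated in the remark following Definition \ref{canonical polygon}. Once the combinatorics of the identification and the angle balancing are checked, the remaining conclusions follow directly from the standard form of Poincaré's theorem.
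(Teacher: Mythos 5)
Your proposal is correct and follows the standard route: verify the side-pairing and angle-cycle hypotheses of Poincar\'e's polygon theorem (conditions (i) and (ii) of Definition \ref{canonical polygon} giving equal opposite side lengths and total angle $2\pi$ around the single vertex cycle), conclude discreteness and the fundamental-domain property, and identify the genus via the Euler characteristic $\chi = 1 - 2g + 1 = 2-2g$ of the quotient cell structure. The paper itself offers no proof of Theorem \ref{PT}, deferring instead to \cite{CLS,BM,HP,AFB}, and those classical references carry out essentially the argument you outline, so your sketch is in line with the intended proof.
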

For proof, see \cite{CLS,BM,HP, AFB}.

\begin{rmk}
    If $M = \mathbb{H}/\Gamma$ is a hyperbolic surface, then $P(g)$ is the fundamental domain, with the choice $\gamma_1\gamma_2\ldots\gamma_{2g}\gamma_1^{-1}\gamma_2^{-1}\ldots\gamma_{2g}^{-1}=id$.
\end{rmk}

\subsection{Teichm\"uller Space}\label{equiv}
In this section, we define the Teichmüller space using canonical polygons. Let $\mathcal{P}(g)$  denote the space of all canonical polygons. For $P(g),P^{\prime}(g)\in \mathcal{P}(g)$,  Two canonical polygons $P(g)$ and $P^{\prime}(g)$ are said to be equivalent if and only if there is an isometry mapping the side $a_i(P(g))$ to the side $a_i\left(P^{\prime}(g)\right), i=1, \ldots, 4 g$. It follows from the Theorem \ref{PT} that each point in $\mathcal{P}(g)$ defines a hyperbolic surface. In this context, we define:
\begin{defn}
    The Teichm\"uller space $\mathcal{T}_g$ is defined to be the set of equivalence classes of $\mathcal{P}(g)$ with the topology $P_j(g) \rightarrow P(g)$  in $\mathcal{P}(g)$ if and only if the lengths of all sides converge and all angles converge, more precisely, if and only if
$$
a_i\left(P_j(g)\right) \rightarrow a_i(P(g)), \quad i=1, \ldots, 4 g,
$$
where, $a_i\left(P_j(g)\right )$ is the side $a_i$ of $P_j(g)$ and
$$
\alpha_i\left(P_j(g)\right) \rightarrow \alpha_i(P(g)), \quad i=1, \ldots, 4 g,
$$
where, $\alpha_i\left(P_j(g)\right)$ is the angle $\alpha_i$ of $P_j(g)$ .
\end{defn}


\section{Hyperbolic Trigonometry}\label{sec:3}
In this section, we state the basic hyperbolic trigonometry formulae and develop some basic lemmas that are needed to prove our main result.
\begin{lemma}\label{sine and cosine rules}\cite{PB, JA}
 
Let $T$ be a triangle with angles $\alpha, \beta, \gamma$ and sides of lengths $a, b, c$. (See Figure \ref{Triangle}) Then

\begin{figure}[H]
    \centering
    \includegraphics[width=4cm]{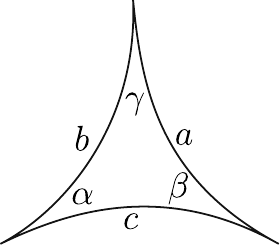}
    \caption{Triangle.}
    \label{Triangle}
\end{figure}
 \begin{enumerate}[(i)]

 \item $\displaystyle \frac{\sinh a}{\sin \alpha}= \frac{\sinh b}{\sin \beta}=\frac{\sinh c}{\sin \gamma}$;
 
     \item  $\cos \gamma=-\cos \alpha \cos \beta+\sin \alpha \sin \beta \cosh c $;

     \item  $\cosh c = \cosh a \cosh b - \sinh a \sinh b \cos \gamma$.
 \end{enumerate}

\end{lemma}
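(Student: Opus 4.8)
The plan is to prove all three identities inside the hyperboloid (Minkowski) model of the hyperbolic plane, where both distances and angles reduce to a single Lorentzian inner product; this keeps the computation symmetric and transparent. Concretely, I would realize $\mathbb{H}$ as the upper sheet $\{\mathbf{x}:\langle \mathbf{x},\mathbf{x}\rangle=-1,\ x_0>0\}$ of the hyperboloid in $\mathbb{R}^{3}$ equipped with the form $\langle \mathbf{x},\mathbf{y}\rangle=-x_0y_0+x_1y_1+x_2y_2$, and represent the three vertices of $T$ by future-pointing unit timelike vectors $A,B,C$, with the usual convention that $\alpha,\beta,\gamma$ sit opposite $a,b,c$. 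The one structural fact I would lean on is that for two such vectors $\langle P,Q\rangle=-\cosh d(P,Q)$, while for two unit spacelike vectors tangent at a common point the form returns the Euclidean angle between them.

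For the second cosine rule (iii) I would expand the two neighbours of $C$ along the geodesics issuing from it, writing $A=\cosh b\,C+\sinh b\,\mathbf{u}$ and $B=\cosh a\,C+\sinh a\,\mathbf{v}$, where $\mathbf{u},\mathbf{v}$ are the unit tangent vectors at $C$ pointing toward $A$ and $B$, so that $\langle \mathbf{u},\mathbf{v}\rangle=\cos\gamma$ and $\langle C,\mathbf{u}\rangle=\langle C,\mathbf{v}\rangle=0$. Taking $\langle A,B\rangle$ and using $\langle C,C\rangle=-1$ collapses every cross term, leaving $-\cosh c=-\cosh a\cosh b+\sinh a\sinh b\cos\gamma$, which is exactly (iii). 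Once the model is in place this is a one-line computation.

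The sine rule (i) I would then extract from (iii) by an elimination-and-symmetry argument. Solving (iii) for $\cos\gamma$ and forming $\sin^2\gamma=1-\cos^2\gamma$ gives
\[
\frac{\sin^2\gamma}{\sinh^2 c}=\frac{1-\cosh^2 a-\cosh^2 b-\cosh^2 c+2\cosh a\cosh b\cosh c}{\sinh^2 a\,\sinh^2 b\,\sinh^2 c}.
\]
The right-hand side is manifestly symmetric in $a,b,c$, so the same value must equal $\sin^2\alpha/\sinh^2 a$ and $\sin^2\beta/\sinh^2 b$; taking positive square roots (all quantities are positive for a genuine triangle) yields $\sinh a/\sin\alpha=\sinh b/\sin\beta=\sinh c/\sin\gamma$.

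The part I expect to be the main obstacle is the first cosine rule (ii), the ``dual'' identity relating an angle to the two adjacent angles and the opposite side, since it cannot be read off from a single inner product of the vertices. My preferred route is the polar (dual) triangle: each side of $T$ spans a plane through the origin carrying a unit spacelike normal, and, with consistent orientations, the three normals $\mathbf{n}_a,\mathbf{n}_b,\mathbf{n}_c$ form a dual triangle in which the roles of sides and angles of $T$ are interchanged, each shifted by the standard $\pi-(\cdot)$; applying the already-proven identity (iii) to this dual configuration and translating back through the substitution (side $\leftrightarrow$ exterior angle) produces (ii). The delicate points are fixing the orientations so that $\langle \mathbf{n}_a,\mathbf{n}_b\rangle=-\cos\gamma$ with the correct sign and verifying the precise side/angle correspondence of the dual triangle. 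If that bookkeeping proves cumbersome, the fallback is to eliminate $\cosh a$ and $\cosh b$ algebraically among the three instances of (iii), which reaches (ii) as well but through a heavier computation.
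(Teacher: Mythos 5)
Your proposal is correct, but note that the paper itself contains no proof of this lemma: it is quoted as standard background and attributed to the references \cite{PB, JA}, so there is no internal argument to compare against. Your self-contained hyperboloid-model proof is essentially the textbook treatment (it is close to the development in one of the cited sources): the Lorentzian expansion $A=\cosh b\,C+\sinh b\,\mathbf{u}$, $B=\cosh a\,C+\sinh a\,\mathbf{v}$ does give (iii) in one line, and your derivation of (i) is exactly right, since the quantity $D:=1-\cosh^2a-\cosh^2b-\cosh^2c+2\cosh a\cosh b\cosh c$ is symmetric and positive for a nondegenerate triangle (it equals $\sin^2\gamma\,\sinh^2a\,\sinh^2b$), so the positive square roots are licit. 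The one soft spot is your primary route to (ii): the polar triangle of a hyperbolic triangle lies in de Sitter space --- the normals $\mathbf{n}_a,\mathbf{n}_b,\mathbf{n}_c$ are spacelike and adjacent normals pair to $-\cos(\cdot)$ rather than $-\cosh(\cdot)$ --- so one cannot literally ``apply (iii) to the dual configuration''; the rigorous version of that idea inverts the Gram matrix of the normals against that of the vertices. You flagged this honestly, and your fallback elimination does close the gap cleanly: writing $\sin\alpha=\sqrt{D}/(\sinh b\sinh c)$ and $\sin\beta=\sqrt{D}/(\sinh a\sinh c)$ via (i), and substituting the first cosine law for $\cos\alpha$ and $\cos\beta$, the numerator factors and one gets
\[
-\cos\alpha\cos\beta+\sin\alpha\sin\beta\cosh c
=\frac{(\cosh^2c-1)(\cosh a\cosh b-\cosh c)}{\sinh a\,\sinh b\,\sinh^2c}
=\cos\gamma,
\]
which is (ii). So the proposal is complete as a proof, and arguably buys more than the paper's citation does: it makes the article self-contained, at the cost of introducing the Minkowski model, which the paper otherwise never uses (everything else is done in the upper half-plane).
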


\begin{notn}\label{res:Triangle sides}
    Let $T$ be the triangle described above, then
\begin{enumerate}[(i)]
\item $\gamma=\psi(\alpha, c, \beta)$, where $\psi(\alpha, c, \beta) = \cos^{-1}\left(-\cos \alpha \cos \beta+\sin \alpha \sin \beta \cosh c\right)$;
   \item $\gamma=f(c, \beta, a)$, where $\displaystyle f(c, \beta, a) = \sin^{-1}\left(\frac{\sinh c \sin \beta}{\sinh a \sqrt{\left(\cosh a \cosh c - \sinh a \sinh c \cos \beta \right)^{2}- 1}}\right)$;
   \item $b=g(\alpha, \beta, \gamma)$, where $ \displaystyle g(\alpha, \beta, \gamma) =\cosh^{-1} \left( \frac{\cos \beta + \cos \alpha \cos \gamma}{\sin \alpha \sin \gamma} \right)$.
\end{enumerate}
\end{notn}

\begin{lemma}\label{alpha}
    Let ABCD be a quadrilateral. Let $AC$ be the diagonal and separate it into two triangles such that $\angle ABC=\alpha, \angle BCA=\beta_1, \angle ACD= \beta_2, \angle CDA= \gamma, \angle DAC=\delta_2, \angle BAC=\delta_1$, then $\alpha=h\left(\beta_1, \beta_2, \gamma, \delta_2, \delta_1\right)$, where
    
    $$h\left(\beta_1, \beta_2, \gamma, \delta_2, \delta_1\right)=\cos^{-1}\left(-\cos \beta_1 \cos \delta_1 + \sin \beta_1 \sin \delta_1 \cosh(g(\beta_2, \gamma, \delta_2))\right)$$
\end{lemma}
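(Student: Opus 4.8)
The plan is to exploit the diagonal $AC$ as the side shared by the two triangles $ABC$ and $ACD$: I would first compute the length of $AC$ from the triangle in which all three angles are prescribed, and then feed that length into the second law of cosines for the other triangle to recover $\alpha$.

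First I would work in triangle $ACD$, whose three interior angles $\angle ACD=\beta_2$, $\angle CDA=\gamma$, and $\angle DAC=\delta_2$ are all given. The side $AC$ is opposite the vertex $D$, i.e. opposite the angle $\gamma$. Hence by Notation \ref{res:Triangle sides}(iii), which expresses the side opposite the middle argument in terms of the three angles, the length of $AC$ is exactly $\ell_{AC}=g(\beta_2,\gamma,\delta_2)$. Note that the formula defining $g$ is symmetric in its first and third arguments, so placing $\beta_2$ and $\delta_2$ in those slots is unambiguous.

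Next I would pass to triangle $ABC$, in which $AC$ is opposite the vertex $B$, i.e. opposite the angle $\alpha=\angle ABC$, while the two adjacent angles are $\beta_1=\angle BCA$ and $\delta_1=\angle BAC$. Applying the second cosine rule, Lemma \ref{sine and cosine rules}(ii), to the angle $\alpha$ and its opposite side $AC$ gives $\cos\alpha = -\cos\beta_1\cos\delta_1 + \sin\beta_1\sin\delta_1\cosh(\ell_{AC})$, where again the symmetry of this expression in $\beta_1,\delta_1$ makes the role of the two adjacent angles interchangeable. Substituting $\ell_{AC}=g(\beta_2,\gamma,\delta_2)$ from the previous step and applying $\cos^{-1}$ yields $\alpha = h(\beta_1,\beta_2,\gamma,\delta_2,\delta_1)$, exactly as claimed.

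I do not expect any serious obstacle; the content is essentially bookkeeping. The one point requiring care is to keep the conventions of Lemma \ref{sine and cosine rules} and Notation \ref{res:Triangle sides} aligned with the labeling of the quadrilateral — specifically that in those statements side $c$ is opposite angle $\gamma$ — so that $AC$ is correctly identified as the side opposite $\gamma$ in $ACD$ and opposite $\alpha$ in $ABC$. It is also implicit that the diagonal $AC$ lies inside $ABCD$, so that the two triangles genuinely partition the quadrilateral and the splittings $\angle BCD=\beta_1+\beta_2$ at $C$ and $\angle DAB=\delta_1+\delta_2$ at $A$ are the actual interior angles; I would note this convexity assumption at the outset.
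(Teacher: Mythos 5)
Your proposal is correct and follows exactly the paper's own argument: compute the length of the diagonal $AC$ as $g(\beta_2,\gamma,\delta_2)$ from triangle $ACD$ via Notation \ref{res:Triangle sides}(iii), then apply the cosine rule of Lemma \ref{sine and cosine rules}(ii) in triangle $ABC$ to solve for $\alpha$. Your added remarks on the symmetry of $g$ in its outer arguments and on the implicit convexity of $ABCD$ are sound points of care that the paper leaves tacit, but they do not change the route.
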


\begin{figure}[H]
    \centering
    \includegraphics[width=4cm]{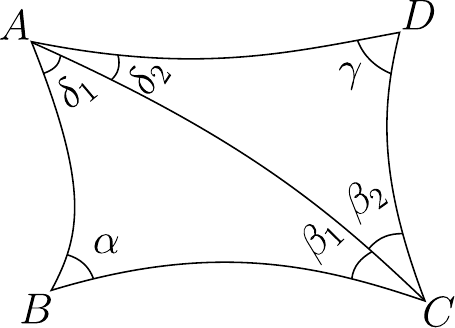}
    \caption{}
    
\end{figure}
\begin{proof}
  By Remark \ref{res:Triangle sides}, we have:
\begin{equation*}
\textit {length of AC}  = g(\beta_2, \gamma, \delta_2)
\end{equation*}

And by Lemma \ref{sine and cosine rules}, we can express $\cos \alpha$ as:
\begin{equation*}
\cos \alpha = -\cos \beta_1 \cos \delta_1 + \sin \beta_1 \sin \delta_1 \cosh b_1
\end{equation*}

This implies:
\begin{equation*}
\cos \alpha = -\cos \beta_1  \cos \delta_1 + \sin \beta_1 \sin \delta_1  \cosh(g(\beta_2, \gamma, \delta_2))
\end{equation*}

Thus, we can calculate $\alpha$ as:
\begin{equation*}
\alpha = \cos^{-1}\left(-\cos \beta_1 \cos \delta_1 + \sin \beta_1 \sin \delta_1 \cosh(g(\beta_2, \gamma, \delta_2))\right)
\end{equation*}

We denote this function as:
\begin{equation*}
h(\beta_1, \beta_2, \gamma, \delta_2, \delta_1) = \cos^{-1}\left(-\cos \beta_1 \cos \delta_1 + \sin \beta_1 \sin \delta_1 \cosh(g(\beta_2, \gamma, \delta_2))\right).
\end{equation*}

\end{proof}
\begin{lemma}\label{res:Schmutz} \cite{PSS}
    Let $T$ be a triangle with the notation of Figure \ref{Triangle}. Let $T^{\prime}$ be a triangle with sides of length $a^{\prime}, b^{\prime}, c^{\prime}$ and angles $\alpha^{\prime}, \beta^{\prime}, \gamma^{\prime}$. Let $a=a^{\prime}$ and $b=b^{\prime}$. Then
    $$ c^{\prime} > c \Longleftrightarrow \gamma^{\prime} > \gamma $$
\end{lemma}


\section{Angle parametrization of Teichm\"uller space}
In this section, we prove Theorem~\ref{thm:1}. We need the following lemma:

\begin{lemma}\label{Quadrilateral lemma}
 
Given two quadrilaterals $ABCD$ and $A'B'C'D'$ with the same lengths of four sides, and denoting their respective angles as $\alpha, \beta, \gamma, \delta$ and $\alpha', \beta', \gamma', \delta'$ in the natural order (see Figure \ref{qd}).
If $\alpha+\beta+\gamma+\delta = \alpha'+\beta'+\gamma'+\delta'$ and $\alpha - \beta + \gamma - \delta = \alpha' - \beta' + \gamma' - \delta'$, then  $(\alpha', \beta', \gamma', \delta') = (\alpha, \beta, \gamma, \delta)$.
\end{lemma}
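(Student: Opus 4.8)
The plan is to exploit the fact that a hyperbolic quadrilateral with prescribed side lengths has exactly one degree of freedom, which I will coordinatize by the length of a diagonal, and then to show that the hypotheses pin this coordinate down. Writing the four given side lengths as $|AB|,|BC|,|CD|,|DA|$, I introduce the diagonal $d=|AC|$. Cutting $ABCD$ along $AC$ produces two triangles $ABC$ and $ACD$ sharing the side $AC$; by the SSS-rigidity of hyperbolic triangles (the cosine rule, Lemma~\ref{sine and cosine rules}(iii)) each triangle, and hence the whole quadrilateral, is determined up to isometry by $d$ together with the four fixed side lengths. Thus for fixed side lengths every angle is a function of the single variable $d$, and the \emph{same} functions govern both $ABCD$ and $A'B'C'D'$, since the two quadrilaterals share all four side lengths.

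Next I would rewrite the two hypotheses in a more convenient form. Adding and subtracting the equalities $\alpha+\beta+\gamma+\delta=\alpha'+\beta'+\gamma'+\delta'$ and $\alpha-\beta+\gamma-\delta=\alpha'-\beta'+\gamma'-\delta'$ shows that they are together equivalent to the pair
\begin{equation*}
\alpha+\gamma=\alpha'+\gamma', \qquad \beta+\delta=\beta'+\delta'.
\end{equation*}
The key observation is that, relative to the chosen diagonal, $\beta=\angle ABC$ and $\delta=\angle CDA$ are exactly the angles of the two triangles lying \emph{opposite} the common side $AC$, whereas the corner angles $\alpha$ and $\gamma$ are the ones split by the diagonal. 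Since the two sides of each triangle adjacent to $\beta$ (respectively $\delta$) have fixed length, Lemma~\ref{res:Schmutz} applies directly: as $d$ increases, $\beta$ strictly increases and $\delta$ strictly increases. Hence the function $d\mapsto \beta(d)+\delta(d)$ is strictly increasing on the interval of admissible diagonal lengths cut out by the triangle inequalities for the two triangles.

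Finally, let $d'=|A'C'|$ denote the diagonal of the second quadrilateral. The monotone function $Q(d):=\beta(d)+\delta(d)$ is the same for both quadrilaterals, and the hypothesis gives $Q(d)=\beta+\delta=\beta'+\delta'=Q(d')$; strict monotonicity forces $d=d'$. With the four side lengths and the diagonal now equal, the two quadrilaterals are congruent, so every corresponding angle agrees and $(\alpha',\beta',\gamma',\delta')=(\alpha,\beta,\gamma,\delta)$, as claimed.

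I expect the main obstacle to be the first step: justifying that the four side lengths together with a single diagonal determine the quadrilateral, which implicitly requires that both quadrilaterals have the same combinatorial type --- in particular that $D$ (respectively $D'$) lies on the opposite side of the line $AC$ from $B$, so that the diagonal splits the corner angles additively rather than by subtraction. For the convex canonical polygons relevant here this is automatic, but I would state the embeddedness hypothesis explicitly and verify that the admissible range of $d$ keeps the configuration of this type. A secondary point worth recording is that only the single equality $\beta+\delta=\beta'+\delta'$ is actually used in the argument; the complementary equality $\alpha+\gamma=\alpha'+\gamma'$ yields the same conclusion through the other diagonal $BD$, so the two hypotheses are mutually consistent and either one already suffices.
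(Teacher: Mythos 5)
Your proposal is correct and takes essentially the same route as the paper: both arguments cut the quadrilateral along the diagonal $AC$ and invoke Lemma~\ref{res:Schmutz} to see that the two angles opposite that diagonal vary strictly monotonically with its length, so that the hypotheses (equivalently, the equality of the sums of opposite-pair angles) pin the diagonal down and force congruence. The difference is only presentational --- the paper phrases the monotonicity as a proof by contradiction with the alternating angle sum, while you use the diagonal length as a monotone coordinate directly, which as a minor bonus also handles the degenerate case where the two angles opposite the diagonal agree, a case the paper's ``without loss of generality'' passes over silently.
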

\begin{figure}[H]
    \centering
    \includegraphics[width=10cm]{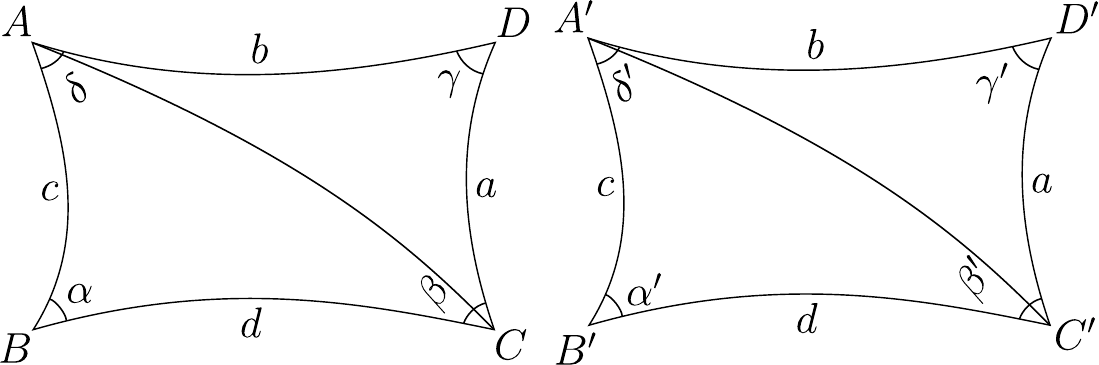}
    \caption{}
    \label{qd}
\end{figure}

\begin{proof}
    If possible, $(\alpha', \beta', \gamma', \delta') \neq (\alpha, \beta, \gamma, \delta)$. Without loss of generality assume that $\alpha > \alpha'$. Let $AC$ and $A'C'$ represent the respective diagonals of quadrilaterals $ABCD$ and $A'B'C'D'$.

 By Lemma \ref{res:Schmutz}, it follows that the length of $AC$ is greater than  the length of $ A'C'$, implying $\gamma > \gamma'$. Since $\alpha + \beta + \gamma + \delta = \alpha' + \beta' + \gamma' + \delta'$, we observe that $\beta + \delta < \beta' + \delta'$, which implies $\alpha - \beta + \gamma - \delta > \alpha' - \beta' + \gamma' - \delta'$. This, however, leads to a contradiction.
\end{proof}

\begin{proof}[The proof of Theorem~\ref{thm:1}]

Let $g \geq 3$ and $P(g)$ be a cononical polygon with sides $a_i$ and angle $\alpha_i $ between $a_i$ and $a_{i+1}$ for $i=1,...,4g$ (the indices are taken modulo $4g$). Let $Q_i= a_i\cap a_{i+1}$ and $b_i$ be the geodesic segment between $Q_{4g}$ and $Q_{i+1}$ for $i=1,\dots, 4g-4, i\neq 2g$. $b_{2g}$ is defined to be the geodesic segment joining $Q_{2g}$ and $Q_{2g+2}$. $\phi_i$ be the angle between the segment $b_i$ and $a_{i+1}$, for $i=1,..,4g-5, i\neq 2g$. $\beta_i$ be the angle between $b_{i-1}$ and $b_i$ for $i=2,..., 4g-5, i\neq \beta_{2g}, \beta_{2g+1}$. Let $\beta_1$ be the angle between $a_1$ and $b_1$. Denote $\gamma$ to be the angle between the segment $b_{2g-1}$ and $b_{2g+1}$.
$P(g)$ is separated by the geodesic segments $b_1, b_2, \dots,  b_{4g-5}$ into one quadrilateral $S$ and $4g-4$ triangles $T_i, \;i= 1,\dots,4g-4$. Further $T_i$ has sides $b_{i-1}, b_i, a_{i+1}$, for $i=2,\dots,4g-4, i \neq 2g, 2g+1$, the triangle $T_1$ has sides $a_1, a_2, b_1$; the triangle $T_{2g}$ has sides $b_{2g}, a_{2g+1}, a_{2g+2}$ and the triangle $T_{2g+1}$ has sides $b_{2g-1}, b_{2g}, b_{2g+1}$ ( see Figure \ref{Fig1}; this idea of decomposition into triangles and quadrilateral is used by Schaller \cite{PSS} ). Let $L(a_i), L(b_j)$ denote the length of $a_i, b_j$ respectively for $1 \leq i \leq 4g$ and $1\leq j \leq 4g-4$.

\begin{figure}[H]
    \includegraphics[width=8cm]{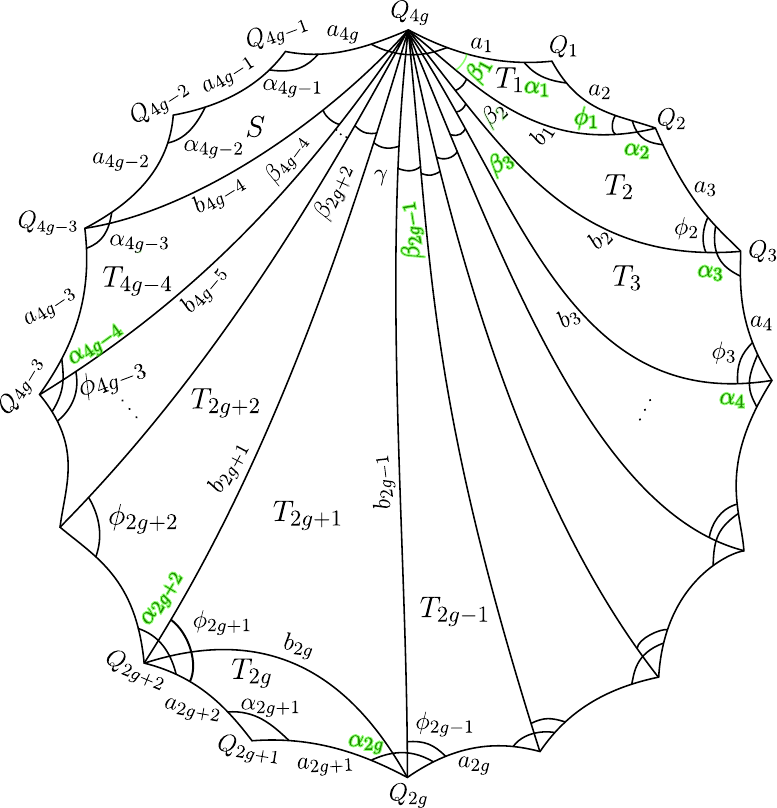}
    \caption{Canonical 4g-gon, here coloured angles are the parameters.}  
    \label{Fig1}
\end{figure}
For $\theta=\left(\theta_1,...,\theta_{6g-5}\right) \in \mathbb{R}^{6g-5}$,
$\theta_i=\alpha_i$, for $i=1,...,4g-4, i\neq 2g+1, \theta_{2g+1}=\phi_1$, and $\theta_{4g-4+i}=\beta_i$, for each $i=1,\dots,2g-1$. Let $f,g,h$, and $\psi$ be defined as in Section \ref{sec:3}.
We now compute each and every $\phi_i$ in terms of $\theta_i's$.

$\phi_2=h\left(\alpha_2-\phi_1, \phi_1, \alpha_1, \beta_1, \beta_2\right)=h\left(\theta_2-\theta_{2g+1},\theta_{2g+1} , \theta_1, \theta_{4g-3}, \theta_{4g-2}\right)$,

$\phi_i = h(\alpha_i - \phi_{i-1}, \phi_{i-1}, \alpha_{i-1} - \phi_{i-2}, \beta_{i-1}, \beta_i) = h(\theta_i - \phi_{i-1}, \phi_{i-1}, \theta_{i-1} - \phi_{i-2}, \theta_{4g-5+i}, \theta_{4g-4+i})$ for \(i = 3, \ldots, 2g-1\).

It is evident that the triangle $T_{2g}$ and the triangle $T_1$ are congruent, implying that $L(b_{2g}) = L(b_1)$. By iterative use of $f,g,h$ and $\psi$, we have

$L(b_{2g-1})= g \left(\phi_{2g-1}, \alpha_{2g-1}-\phi_{2g-2}, \beta_{2g-1}\right)$,

$L(b_{2g})=L(b_1)= g\left(\phi_1, \alpha_1, \beta_1\right)$,

$\gamma=f\left(L(b_{2g}), \alpha_{2g}-\phi_{2g-1}-\beta_1, L(b_{2g-1})\right)$,

$\phi_{2g+1}=\phi_1+ \psi\left(\alpha_{2g}-\phi_{2g-1}-\beta_1, L(b_{2g-1}), \gamma\right)$, 

$L(b_{2g+1})=g\left(\phi_{2g+1}-\phi_1, \alpha_{2g}-\phi_{2g-1}-\beta_1, \gamma\right)$,

$L(a_{2g+3})=L(a_3)=g\left( \alpha_2-\phi_1, \beta_2, \phi_2 \right)$,

$\beta_{2g+2}=f\left(L(a_{2g+3}), \alpha_{2g+2}-\phi_{2g+1}, L(b_{2g+1})\right)$,

$\phi_{2g+2}=\psi\left(\alpha_{2g+2}-\phi_{2g+1}, L(b_{2g+1}), \beta_{2g+2}\right) $,


$L(b_{2g+i-1})=g\left(\phi_{2g+i-1}, \alpha_{2g+i-1}-\phi_{2g+i-2}, \beta_{2g+i-1}\right)$,

$L(a_{2g+i+1})=L(a_{i+1})=g\left( \alpha_i-\phi_{i-1}, \beta_i, \phi_i \right)$,

$\beta_{2g+i}=f\left(L(a_{2g+i+1}), \alpha_{2g+i}-\phi_{2g+i-1}, L(b_{2g+i-1})\right)$, and

$\phi_{2g+i}=\psi\left(\alpha_{2g+i}-\phi_{2g+i-1}, L(b_{2g+i-1}), \beta_{2g+i}\right)$, for $i=3,\dots2g-3$.

 \vspace{0.1cm}
Thus all $\phi_i's$ are determined in terms of $\theta_i's$ uniquely, so triangles $T_i$'s are also determined uniquely. It remains to show that the quadrilateral $S$ is uniquely determined. Since the sum of the interior angles of the 4g-gon is $2\pi$, $\angle Q_{4g-1}Q_{4g}Q_{4g-3}+ \angle Q_{4g}Q_{4g-3}Q_{4g-2}+\alpha_{4g-2}+\alpha_{4g-1}$ is constant. By condition (v) of definition \ref{canonical polygon}, 
$ \angle Q_{4g-1}Q_{4g}Q_{4g-3}- \angle Q_{4g}Q_{4g-3}Q_{4g-2}+\alpha_{4g-2}-\alpha_{4g-1}$ is constant. Thus, by Lemma \ref{Quadrilateral lemma}, the quadrilateral is uniquely determined.

\end{proof}

\section{parametrization of the space of hyperelliptic surfaces }

\begin{defn}
A hyperelliptic surface is a closed hyperbolic surface of genus $g$ which has an isometry $\phi$, with $\phi^2= id$ and with exactly $2g+2$ fixed points. 

\end{defn}

\begin{thm} \cite{PSS}
Let $M$ be a closed hyperbolic surface of genus $g$.
    M is hyperelliptic if and only if M has a corresponding canonical polygon with equal opposite angles.
\end{thm}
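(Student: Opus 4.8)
The plan is to recognize that both the definition of a hyperelliptic surface and the condition of equal opposite angles are two faces of a single geometric fact: the canonical $4g$-gon $P$ is invariant under the half-turn $R$, the elliptic rotation by $\pi$ about its center $O$, which sends the side $a_i$ to $a_{i+2g}$. I would first isolate this as a lemma. Since Definition~\ref{canonical polygon}~(i) already forces $a_i$ and $a_{i+2g}$ to have equal length, assembling $P$ side by side shows that a half-turn about a suitable center $O$ matches the lengths automatically; the \emph{additional} content of $R(P)=P$ is exactly that the turning angle at the vertex $Q_i$ equals the turning angle at $Q_{i+2g}$. Hence $R(P)=P$ with $R(a_i)=a_{i+2g}$ holds if and only if $\alpha_i=\alpha_{i+2g}$ for all $i$.

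For the implication from equal opposite angles to hyperellipticity, I would take $R$ as above and show it normalizes the Fuchsian group $\Gamma$ of Theorem~\ref{PT}: as an isometry fixing the tile $P$ and permuting the tiles $\{\gamma P:\gamma\in\Gamma\}$ of the $\Gamma$-tiling of $\mathbb{H}$, connectivity of the tiling forces $R\Gamma R^{-1}=\Gamma$. Thus $R$ descends to an isometric involution $\bar\phi$ of $M=\mathbb{H}/\Gamma$ with $\bar\phi^2=\mathrm{id}$. I would then read off $\mathrm{Fix}(\bar\phi)$ directly from the polygon: the center $O$ gives one fixed point; the vertices (all identified to a single point of $M$, since by Definition~\ref{canonical polygon}~(ii) the angles sum to $2\pi$) are permuted by $R$ and contribute one more; and for each glued pair $\{a_i,a_{i+2g}\}$ the side-pairing isometry and $R$ both carry the midpoint $m_i$ to $m_{i+2g}$, so the common image $\bar m_i\in M$ is fixed, giving $2g$ further points. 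This exhibits at least $1+1+2g=2g+2$ fixed points, and the Riemann--Hurwitz relation $\#\mathrm{Fix}(\bar\phi)=2g+2-4h$ (with $h$ the genus of $M/\bar\phi$) then forces $h=0$ and exactly $2g+2$ fixed points; hence $\bar\phi$ is a hyperelliptic involution.

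For the converse, I would start from a hyperelliptic involution $\phi$ of $M$ and lift it to an isometry $R$ of $\mathbb{H}$ normalizing $\Gamma$. Because $\phi$ has fixed points, $R$ may be chosen to fix a point $O\in\mathbb{H}$ lying over a fixed point of $\phi$, and since it has order two on $M$ it is the half-turn about $O$. The goal is to produce a canonical polygon centered at $O$ on which $R$ acts as this half-turn. Exploiting that $\phi$ acts as $-\mathrm{id}$ on $H_1(M)$, I would select a $\phi$-equivariant standard system of generating loops based at $O$; cutting $M$ along them yields a $4g$-gon whose identifications $a_i\leftrightarrow a_{i+2g}$ intertwine with $R$, making $P$ centrally symmetric, so by the lemma of the first paragraph its opposite angles are equal.

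The hard part is this converse: one must guarantee that a genuinely \emph{canonical} polygon, meeting all of conditions (i)--(v) of Definition~\ref{canonical polygon} with their angle constraints, can be chosen symmetrically about a fixed point of $\phi$. In other words, the homological identity $\phi_*=-\mathrm{id}$ must be upgraded to an explicit geometric cut system adapted to $R$, and one must check that the symmetric cutting delivers the canonical normal form rather than merely some $R$-invariant fundamental polygon. The forward direction, by comparison, is routine once $R$ is seen to descend, reducing to the fixed-point count above.
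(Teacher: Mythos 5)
The paper itself offers no proof of this statement; it is quoted directly from \cite{PSS}, so your attempt can only be measured against Schaller's argument and on its own merits. Your forward direction (equal opposite angles $\Rightarrow$ hyperelliptic) is essentially the standard and correct route: central symmetry of the polygon, descent of the half-turn $R$, and the count $1+1+2g$ of fixed points (center, the single vertex class coming from the angle sum $2\pi$, and the $2g$ midpoints of paired sides), with Riemann--Hurwitz forcing exactly $2g+2$. Two points there still need honest arguments. First, the opening lemma (opposite sides equal and opposite angles equal $\Leftrightarrow$ $R(P)=P$) is asserted, not proved; the ``if'' direction needs the induction around the boundary matching $Q_1\mapsto Q_{2g+1}$ side by side. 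Second, ``connectivity of the tiling forces $R\Gamma R^{-1}=\Gamma$'' is circular as written: you do not yet know that $R$ maps tiles of the $\Gamma$-tiling to tiles --- that is exactly what normalization would give you. The repair is a direct computation on the generators of Theorem~\ref{PT}: $R\gamma_i R^{-1}$ carries $a_{i+2g}=R(a_i)$ onto $a_i$ and sends $P$ to the tile adjacent along $a_i$, so by uniqueness of the orientation-preserving isometry matching a side pair with prescribed adjacency, $R\gamma_i R^{-1}=\gamma_i^{-1}$, whence $R$ normalizes $\Gamma$.

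The genuine gap is the converse, and you name it yourself: everything after ``I would select a $\phi$-equivariant standard system of generating loops'' is a plan, not a proof. The homological fact $\phi_*=-\mathrm{id}$ gives no control over the geometry of the cut system: it does not make the loops geodesic, simple, pairwise disjoint away from the basepoint, nor does it yield condition (iii) of Definition~\ref{canonical polygon}, that every angle lies strictly between $0$ and $\pi$ --- and that convexity-type constraint is a real restriction on which cut systems work, not a bookkeeping matter. (By contrast, conditions (iv) and (v) are automatic once opposite angles are equal, so the substantive obligations are (ii), (iii), and embeddedness of the resulting $4g$-gon.) There is also a concrete slip in your setup: you base the loops at the fixed point under $O$, but in an $R$-invariant canonical polygon the vertices project to a Weierstrass point \emph{distinct} from the one below the center --- the center is an interior fixed point, the vertex class is another. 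Loops based at the point under $O$ would make $O$ a vertex lift, not the center, so the symmetric cutting as you describe it cannot produce a polygon on which $R$ acts as the half-turn about its center. Since the equivalence fails as a theorem without this half, the proposal as it stands proves only one implication.
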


We define $\mathcal{H}_g$ to be the space of all hyperelliptic surfaces. In order to parametrize $\mathcal{H}_g$, we need the following lemma:

\begin{lemma}\label{res: quadrilateral lemma}
Let $ABCD$ be a convex quadrilateral with diagonal $AC$. The lengths of sides $AB$ and $AD$ are given. If the measures of angles $\angle BAC$ and $\angle CAD$ are known, and the sum of angles $\angle ABC + \angle BCD + \angle CDA$ is known, then the quadrilateral $ABCD$ is uniquely determined.

\begin{figure}[H]
    \centering
    \includegraphics[width=4cm]{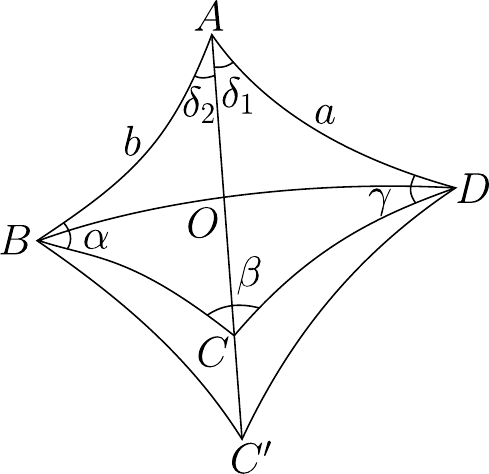}
    \caption{}

\end{figure}
\end{lemma}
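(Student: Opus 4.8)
The plan is to use the diagonal $AC$ to reduce everything to a single unknown, namely the length $d = |AC|$, and then to show that $d$ is pinned down by the prescribed angle sum. First I would observe that the diagonal splits $ABCD$ into the triangles $\triangle ABC$ and $\triangle ACD$, which share the side $AC$. If $d$ were known, then $\triangle ABC$ would be determined by the side–angle–side data $|AB|$, $\angle BAC$, $d$, and likewise $\triangle ACD$ by $|AD|$, $\angle CAD$, $d$, the remaining sides and angles being computable from Lemma~\ref{sine and cosine rules} and Notation~\ref{res:Triangle sides}. Hence the whole quadrilateral is determined by $d$ together with the given data, and the problem reduces to showing that $d$ is uniquely determined.

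Next I would reformulate the hypothesis on the angle sum in terms of area. Since $\angle DAB = \angle BAC + \angle CAD$ is known, the total interior angle sum of $ABCD$ equals $\angle BAC + \angle CAD + (\angle ABC + \angle BCD + \angle CDA)$, which is entirely known by hypothesis. By the Gauss--Bonnet theorem, the area of the hyperbolic quadrilateral equals $2\pi$ minus this angle sum, so $\mathrm{Area}(ABCD)$ is determined by the data. The task is therefore to prove that the area is a strictly monotonic function of $d$, for then the known area fixes $d$. To see the monotonicity, I would regard $d$ as a parameter by sliding $C$ outward along the ray from $A$ through $C$, keeping $A$, $B$, $D$ and the angles $\angle BAC$, $\angle CAD$ fixed. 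Increasing $d$ to some $d' > d$ replaces $C$ by a point $C'$ with $C$ lying on the segment $AC'$; by convexity each of $\triangle ABC$ and $\triangle ACD$ is then a proper subset of $\triangle ABC'$ and $\triangle AC'D$ respectively, so both sub-areas strictly increase, and hence so does their sum. Thus $d \mapsto \mathrm{Area}(ABCD)$ is strictly increasing, hence injective, so the known area determines $d$ uniquely; combined with the first step this determines the quadrilateral.

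I expect the only delicate point to be the strict monotonicity of the area in $d$. The nesting argument above makes it transparent, but if one prefers to stay within the purely trigonometric framework of Section~\ref{sec:3}, one can instead track the angles $\angle BCA$ and $\angle ACD$ as functions of $d$ by means of Lemma~\ref{res:Schmutz}, at the cost of a more computational argument that mirrors the proof of Lemma~\ref{Quadrilateral lemma}. One should also check that the configuration stays a genuine convex quadrilateral over the relevant range of $d$, although this does not affect the uniqueness conclusion, which follows purely from the injectivity established above.
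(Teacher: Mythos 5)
Your proposal is correct and follows essentially the same route as the paper: the paper's proof also varies the diagonal length $t=|AC|$ while fixing $|AB|$, $|AD|$, $\angle BAC$, $\angle CAD$, and uses the nesting of the quadrilaterals to conclude that the area increases, hence (via Gauss--Bonnet, left implicit in the paper) that the angle sum $\angle ABC+\angle BCD+\angle CDA$ is strictly decreasing in $t$, which pins down $t$ and then the two triangles by side--angle--side. Your write-up is in fact slightly more complete, since it makes the Gauss--Bonnet step and the reduction to the single unknown $d$ explicit.
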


\begin{proof}
Let $a, b, t$ be the length of $AD, AB$ and $AC$, respectively. Also, $\delta_1$ and $\delta_2$ denotes the angles $\angle BAO$ and $\angle DAO$, respectively. Let $O$ be the intersection point of diagonal $AC$ and $BD$. Increase $t$ without changing $a$, $b$, $\delta_1$ and $\delta_2$. Let $C'$ be the corresponding point of $C$ after the increment.

Therefore, the area of the quadrilateral $ABCD$ is less than the area of the quadrilateral $ABC'D$, implying that $\alpha+\beta+\gamma$ is a continuously decreasing function of $t$. Thus, for each choice of $\alpha+\beta+\gamma$ between $0$ and $\pi$, there exists a unique quadrilateral.
\end{proof}

\begin{figure}
    \centering
    \includegraphics[width=7cm]{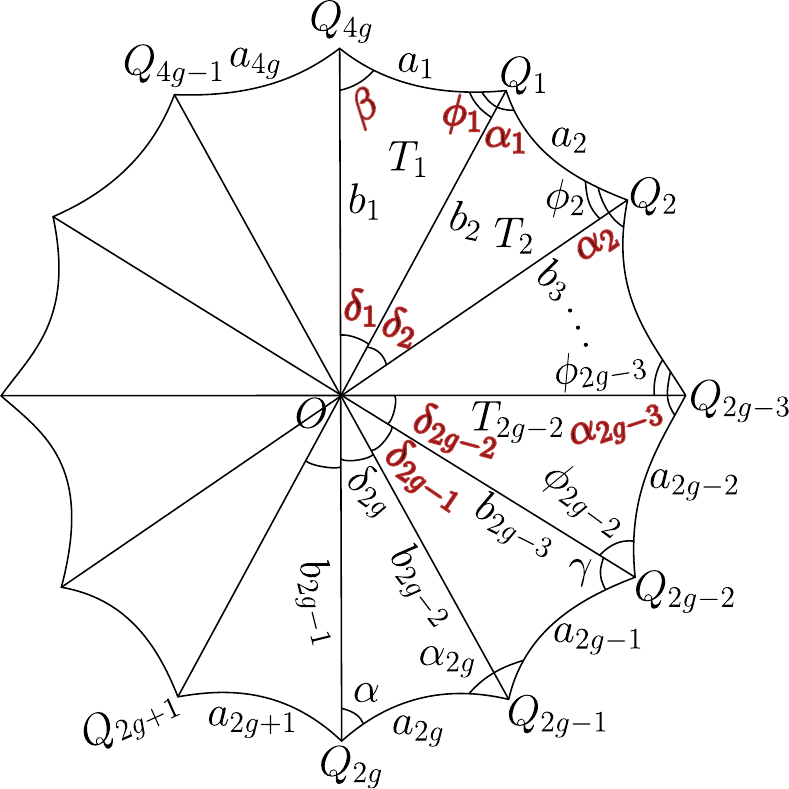}
    \caption{Coloured angles are the parameters}
    \label{Fig2}
\end{figure}

\begin{proof}[The proof of Theorem~\ref{thm:2}]
Consider $g \geq 2$, and let $P(g)\in \mathcal{H}_g$ be a canonical polygon with sides $a_i$ and angles $\alpha_i$ between $a_i$ and $a_{i+1}$ for $i=1, \ldots, 4g$ (indices taken modulo $4g$). 
Define $Q_i$ as the intersection point of $a_i$ and $a_{i+1}$. 
Set $k_i$ as the geodesic segment between $Q_i$ and $Q_{2g+i}$ for $i=1, 2, \ldots, 2g$. One can note that each $k_i$ intersects at a point, which happens to be the midpoint of these $k_i$. 
Let $O$ be the intersection point of all $k_i$.
 Define $b_1 = OQ_{4g}$ and $b_i = OQ_{i+1}$ for $i=2, \ldots, 4g$. 
 Let $\beta = \angle OQ_{4g}Q_1$, $\delta_1 = \angle Q_{4g}OQ_1$, and $\delta_i = \angle Q_{i-1}OQ_{i+1}$ for $i=2, \ldots, 2g$. Define $\phi_1 = \angle Q_{4g}Q_1O$ and $\phi_i = \angle Q_{i-1}Q_iO$ for $i=2, \ldots, 2g$. Let $T_1$ be the triangle with angles $\delta_1, \beta$, and $\phi_1$. $T_i$ be the triangle with angles $\delta_i, \alpha_{i-1}-\phi_i$, and $\phi_i$ for $i=2, \ldots, 2g$ ( see Figure \ref{Fig2} ).

For $\theta = (\theta_1, \ldots, \theta_{4g-2}) \in \mathbb{R}^{4g-2}$, set $\theta_i = \alpha_i$ for $i=1, 2, \ldots, 2g-3$. Set $\theta_{2g-3+i} = \delta_i$ for $i=1, 2, \ldots, 2g-1$. Finally, let $\theta_{4g-3} = \beta$ and $\theta_{4g-2} = \phi_1$. We want to show that all triangles can be uniquely determined by these $4g-2$ parameters. Since hyperelliptic involution is an isometry, it is enough to show the triangles $T_i$ for $i=1, 2, \dots, 2g$ can be determined uniquely.
Let $h$ be defined as in Lemma \ref{alpha}. We get

\[
\begin{aligned}
\phi_2 &= h\left(\alpha_1-\phi_1, \phi_1, \beta, \delta_1, \delta_2\right), \\
 \text{ and } \phi_i &= h\left(\alpha_{i-1}-\phi_{i-1}, \phi_{i-1}, \alpha_{i-2}-\phi_{i-2}, \delta_{i-1}, \delta_i\right), \text{ for } i=1,\dots 2g-2.
\end{aligned}
\]

Thus all triangles $T_i$, for $i=1, 2, ..., 2g-2$ determined uniquely in terms of $\theta_i$'s. Now, it remains to show that the quadrilateral $OQ_{2g}Q_{2g-1}Q_{2g-2}$ is uniquely determined.
 It can be observed that $b_{2g-1} = b_1$. According to Definition \ref{canonical polygon}, $\phi_{2g} + \alpha_{2g} + \gamma = 2\pi - \sum_{i=1}^{2g-3} \alpha_i + \beta - \phi_{2g-2}$ is constant. Additionally, $\delta_{2g} = \pi - \sum_{i=1}^{2g-1} \delta_i$, which is also a constant. Therefore, according to Lemma \ref{res: quadrilateral lemma}, the quadrilateral $OQ_{2g}Q_{2g-1}Q_{2g-2}$ is uniquely determined.

\end{proof}
\subsection{Concluding remarks}
   In the local picture (refer to Figure \ref{localpicture}), when considering a canonical $12$-gon with equal opposite angles, it's evident that all $a_i$'s and $k_i$'s are geodesics within the quotient of the $12$-gon. This observation also holds true for a canonical $4g$-gon with equal opposite angles. Hence the $4g-2$ angles used in the parametrization of $\mathcal{H}_g$ are geodesic angles. It follows from a result of Harvey~\cite{HW,HM} that  $\mathcal{H}_g$ has dimension $4g-2$ and so, the angle parametrization given in Theorem \ref{thm:2} is optimal.

\begin{figure}[H]
    \centering
    \includegraphics[width=11cm]{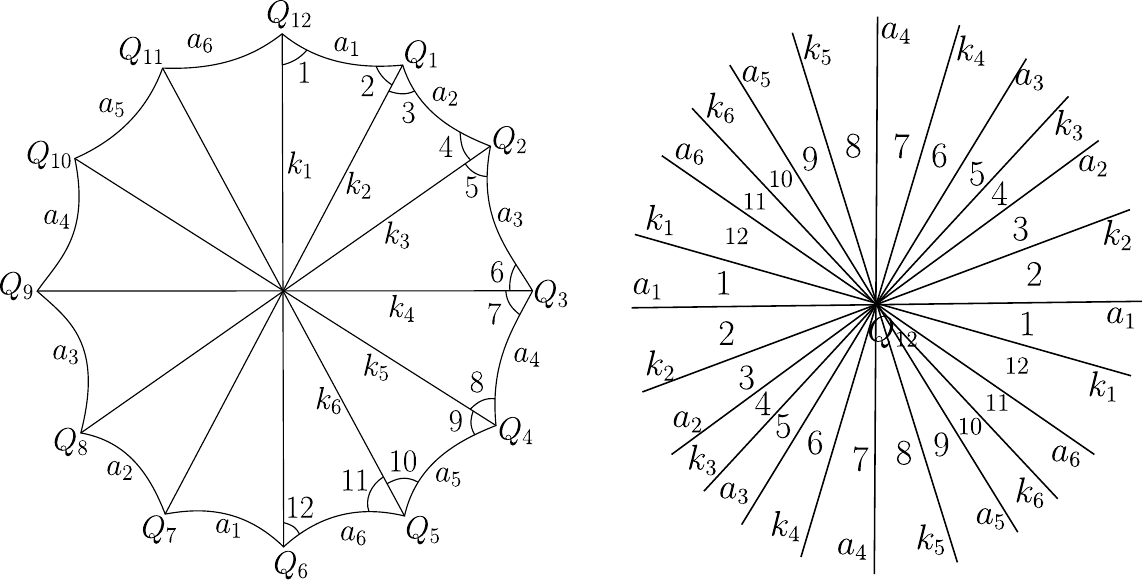}
    \caption{}
\label{localpicture}
\end{figure}




\bibliographystyle{alpha}
\bibliography{angle}

\end{document}